\newcommand{\cu}{\mathcal{u}}
\DeclareMathOperator{\dt}{dt}
\DeclareMathOperator{\dx}{dx}
\DeclareMathOperator{\dist}{dist}
\newcommand{\D}{\overline{D}}
\newcommand{\bbR}{\mathbb{R}}
\DeclareMathOperator{\dive}{div}
\DeclareMathOperator{\op}{p}
\newcommand{\bop}{\boldsymbol{\op}}
\numberwithin{equation}{section}
\newtheorem{theorem}[equation]{Theorem}
\newtheorem{lemma}[equation]{Lemma}
\newtheorem{remark}[equation]{Remark}
\newtheorem{definition}[equation]{Definition}
\begin{document}
\begin{flushleft}

TITLE: A Hopf-type boundary point lemma for pairs of solutions to quasilinear equations.

\medskip

AUTHOR: Leobardo Rosales, Keimyung University

\medskip

ABSTRACT: We show a Hopf boundary point lemma for $u=u_{1}-u_{2},$ given $u_{1},u_{2} \in C^{1,\alpha}$ each weak solutions to a quasilinear equation $\sum_{i=1}^{n} D_{i}(A^{i}(x,u,Du))+B(x,u,Du)=0$ under mild boundedness assumptions on $A^{1},\ldots,A^{n},B.$ 

\medskip

KEYWORDS: partial differential equations, divergence form, Hopf boundary point lemma 

\medskip

MSC numbers: 35B50, 35A07

\section{Introduction}

In this work we give a Hopf-type boundary point result for pairs of solutions to certain \emph{quasilinear equations}. Our main theorem roughly is as follows:

\bigskip

{\bf Theorem \ref{main}:} \emph{Suppose $V \subset \bbR^{n}$ is a $C^{1,\alpha}$ open set for some $\alpha \in (0,1)$ with $0 \in \partial V,$ and suppose $A^{1},\ldots,A^{n} \in C^{2}(\overline{V} \times \bbR \times \bbR^{n})$ and $B \in C^{1}(\overline{V} \times \bbR \times \bbR^{n})$ where $A^{1},\ldots,A^{n}$ satisfy some mild boundedness assumptions. If $\cu_{1},\cu_{2} \in C^{1,\alpha}(\overline{V})$ are each weak solutions over $V$ to the equation
$$\sum_{i=1}^{n} D_{i}(A^{i}(x,\cu,D\cu)) + B(x,\cu,D\cu) = 0,$$
$\cu_{1}(0)=\cu_{2}(0)=0,$ and $\cu_{1}(x) \neq \cu_{2}(x)$ for all $x \in V,$ then $D\cu_{1}(0) \neq D\cu_{2}(0).$}

\bigskip

See as well Definition \ref{Qweaksolution}. The \emph{mild boundedness assumptions} are given by (ii),(iii),(iv) in the statement of Theorem \ref{main}.

\bigskip

Proving Theorem \ref{main} uses standard PDE techniques. For this, we show $\cu=\cu_{1}-\cu_{2}$ solves a \emph{linear equation} over $V$ of the form
\begin{equation} \label{intro}
\sum_{i,j=1}^{n} D_{i}(a^{ij}D_{j}\cu) + \sum_{i=1}^{n} c^{i} D_{i}\cu + d\cu = 0
\end{equation}
(see Definition \ref{weaksolution}) where $a^{ij} \in C^{0,\alpha}(\overline{V}),$ $c^{i} \in L^{\infty}(V),$ and $d \in L^{q}(V)$ for some $q>n.$ We then apply a generalized Hopf boundary point lemma to $\cu$ at the origin to conclude Theorem \ref{main}. More specifically, we apply the recent work \cite{R18b} of the author, given here for convenience to the reader as Lemma \ref{hopf}.

\bigskip

The main concern in order to apply Lemma \ref{hopf} is showing that the coefficient $d$ in \eqref{intro} is in $L^{q}(V)$ for some $q>n.$ This is not immediately evident, as $d$ is defined in terms of the second derivatives of $\cu_{1},\cu_{2},$ and while $\cu_{1},\cu_{2} \in C^{1,\alpha}(\overline{V})$ we can only conclude $\cu_{1},\cu_{2} \in C^{2}(V)$ by standard PDE arguments. Thus, $d \in L^{q}(V)$ for some $q>n$ must be carefully checked using the boundedness assumptions on $A^{1},\ldots,A^{n}$ (specifically, assumption (iv) of Theorem \ref{main}) and interior $C^{2}$ estimates for $\cu_{1},\cu_{2}.$ A natural question is whether we can circumvent this issue, by showing that $\cu=\cu_{1}-\cu_{2}$ solves a different linear equation (as in Definition \ref{weaksolution}) with coefficients which are not defined in terms of the second derivatives of $\cu_{1},\cu_{2}.$ However, this alternate strategy leads to much more cumbersome necessary assumptions on $A^{1},\ldots,A^{n},B;$ see Remark \ref{mainremark}(i). 

\bigskip

Before stating and proving our main result in \S 3, we give in \S 2 our basic definitions and results needed to prove Theorem \ref{main}. In particular, we state Lemma \ref{hopf} in \S 2, which is the version of the generalized Hopf boundary point lemma from \cite{R18b} which we need. 

\subsection{An application}

This work is a generalization of the argument used by the author in Lemma 4.1 of \cite{R18a} to study \emph{co-dimension one area-minimizing currents with tangentially immersed boundary}. 

\bigskip

We describe Lemma 4.1 of \cite{R18a} in a simple form. Suppose $V \subset \bbR^{n}$ is a $C^{1,\alpha}$ open set for some $\alpha \in (0,1)$ with $0 \in \partial V.$ Also suppose $\cu_{1},\cu_{2} \in C^{1,\alpha}(\overline{V}),$ $U \in C^{\infty}(\bbR^{n+1}),$ and for $\ell=1,2$ let
$$\Sigma_{\ell} = \{ (x,\cu_{\ell}(x),U(x,\cu_{\ell}(x))): x \in \overline{V} \} \subset \bbR^{n+2}.$$
Now let $\nu_{\ell}:\Sigma_{\ell} \rightarrow \bbR^{n+1}$ be the upward pointing unit normal of $\Sigma_{\ell}$ within the graph of $U;$ thus $\nu_{\ell}$ is tangent to the graph of $U,$ perpendicular to $\Sigma_{\ell},$ while $\nu_{\ell} \cdot e_{n}>0.$ Finally, we suppose there is a Lipschitz function $H:\bbR^{n+1} \rightarrow \bbR$ so that 
$$\int_{\Sigma_{\ell}} \dive_{\Sigma_{\ell}} X = \int_{\Sigma_{\ell}} X \cdot H \nu_{\ell}$$
for all smooth vector fields $X:\bbR^{n+1} \rightarrow \bbR^{n+1}$ with compact support in $V \times \bbR;$ in \cite{R18a} we say $\Sigma_{\ell}$ has \emph{Lipschitz co-oriented mean curvature} with respect to the graph of $U.$ In \cite{R18a} we argue, and Theorem \ref{main} implies, that $\cu_{1}(0)=\cu_{2}(0)$ while $\cu_{1}(x) \neq \cu_{2}(x)$ for all $x \in V$ implies $D\cu_{1}(0) \neq D\cu_{2}(0).$

\bigskip 

Most of the work in the proof of Lemma 4.1 of \cite{R18a} involves translating and rotating so that we are in a position to essentially apply Theorem \ref{main}. Also to this end, the calculations of the Appendix of \cite{R18a} are done essentially to verify that assumption (iv) of Theorem \ref{main} is satisfied. 

\subsection{Acknowledgements}

This work was partly conducted by the author while visiting the Korea Institute for Advanced Study, as an Associate Member.

\section{Preliminaries}

We will work in $\bbR^{n}$ with $n \geq 2.$ We denote the volume of the open unit ball $B_{1}(0) \subset \bbR^{n}$ by $\omega_{n} = \int_{B_{1}(0)} \dx.$ Standard notation for the various spaces of functions shall be used; we in particular note that $C^{1}_{c}(V;[0,\infty))$ shall denote the set of non-negative continuously differentiable functions with compact support in an open set $V \subseteq \bbR^{n}.$

\bigskip

Also, for $V \subseteq \bbR^{n}$ we shall write functions $A: \overline{V} \times \bbR \times \bbR^{n} \rightarrow \bbR$ by $A=A(x,z,p)$ where $x \in \overline{V},$ $z \in \bbR,$ and $p \in \bbR^{n}.$ For convenience to the reader, we shall let $D_{i}A$ denote the derivative of $A$ with respect to the $x_{i}$-variable for $i \in \{ 1,\ldots,n \},$ $\frac{\partial A}{\partial z}$ the derivative of $A$ with respect to the $z$-variable, and $\frac{\partial A}{\partial p_{j}}$ the derivative of $A$ with respect to the $p_{j}$-variable for $j \in \{ 1,\ldots,n \}.$

\bigskip

We begin by defining the quasilinear equations we shall consider.

\begin{definition} \label{Qweaksolution}
Let $V \subseteq \bbR^{n}$ be an open set, and suppose $A^{1},\ldots,A^{n},B \in C(V \times \bbR \times \bbR^{n}).$  We say $\cu \in C^{1}(V)$ \emph{is a weak solution over $V$ to the equation}
$$\sum_{i=1}^{n} D_{i} \left( A^{i}(x,\cu,D\cu) \right) + B(x,\cu,D\cu) = 0$$
if for all $\zeta \in C^{1}_{c}(V)$ we have
$$\int \sum_{i=1}^{n} A^{i}(x,\cu,D\cu) D_{i}\zeta - B(x,\cu,D\cu)\zeta \dx = 0.$$
\end{definition}

This is definition (13.2) of \cite{GT83}. We will also need to consider linear equations, in order to apply the results of \cite{R18b}. 

\begin{definition} \label{weaksolution}
Let $V \subset \bbR^{n}$ be an open set, and suppose $a^{ij},b^{i},c^{i} \in L^{2}(V)$ and $d \in L^{1}(V)$ for each $i,j \in \{ 1,\ldots,n\}.$ We say $\cu \in L^{\infty}(V) \cap W^{1,2}(V)$ \emph{is a weak solution over $V$ of the equation}
$$\sum_{i,j=1}^{n} D_{i} \left( a^{ij} D_{j}\cu + b^{i}\cu \right) + \sum_{i=1}^{n} c^{i} D_{i}\cu + d\cu \leq 0$$
(or more strictly, $=0$) if for all $\zeta \in C^{1}_{c}(V;[0,\infty))$ we have
$$\int \sum_{i,j=1}^{n} a^{ij} D_{j}\cu D_{i} \zeta + \sum_{i=1}^{n} \left( b^{i}\cu D_{i} \zeta - c^{i} (D_{i}\cu) \zeta \right) - d\cu \zeta \dx \geq 0$$
(respectively, $= 0$).
\end{definition}

The assumptions on the coefficients are merely to ensure integrability. We now introduce some terminology, to more conveniently state our results. 

\begin{definition} \label{uniformlyelliptic}
Let $V \subseteq \bbR^{n}.$
\begin{itemize}
 \item Suppose we have functions $a^{ij}: V \rightarrow \bbR$ for $i,j \in \{ 1,\ldots,n \}.$ We say $\{ a^{ij} \}_{i,j=1}^{n}$ are \emph{uniformly elliptic over $V$ with respect to $\lambda \in (0,\infty)$} if
$$\sum_{i,j=1}^{n} a^{ij}(x) \xi_{i} \xi_{j} \geq \lambda |\xi|^{2}$$
for each $x \in V$ and $\xi \in \bbR^{n}.$
 \item Suppose we have functions $A^{ij}: V \times \bbR \times \bbR^{n} \rightarrow \bbR$ for $i,j \in \{ 1,\ldots,n \}.$ We say $\{ A^{ij} \}_{i,j=1}^{n}$ are \emph{locally uniformly elliptic over $\overline{V} \times \bbR \times \bbR^{n}$} if for each $R \in (0,\infty)$ there is $\lambda_{R} \in (0,\infty)$ so that
$$\sum_{i,j=1}^{n} A^{ij}(x,z,p) \xi_{i} \xi_{j} \geq \lambda_{R} |\xi|^{2}$$
for each $(x,z,p) \in V \times [-R,R] \times \overline{B_{R}(0)}$ and $\xi \in \bbR^{n}.$
\end{itemize}
\end{definition}

Before we give the version of the generalized Hopf boundary point lemma from \cite{R18b} we shall need, we first give the following \emph{interior $C^{2}$ estimate}. We prove Lemma \ref{c2} using Theorem 8.32 of \cite{GT83}. The proof of Lemma \ref{c2} is standard, and known as the \emph{difference quotient method}.

\begin{lemma} \label{c2}
Suppose $V \subseteq \bbR^{n}$ is a bounded open set, and let $\alpha \in (0,1).$ Also suppose 
\begin{itemize}
 \item[(i)] $A^{i} \in C^{2}(\overline{V} \times \bbR \times \bbR^{n})$ for $i \in \{ 1,\ldots,n \}$ and $B \in C^{1}(\overline{V} \times \bbR \times \bbR^{n}),$
 \item[(ii)] $\left\{ \frac{\partial A^{i}}{\partial p_{j}} \right\}_{i,j=1}^{n}$ are locally uniformly elliptic over $\overline{V} \times \bbR \times \bbR^{n+1}.$
\end{itemize} 
If $\cu \in C^{1,\alpha}(\overline{V})$ is a weak solution over $V$ to the equation
\begin{equation} \label{c2_1}
\sum_{i=1}^{n} D_{i}(A^{i}(x,\cu,D\cu)) + B(x,\cu,D\cu) = 0,
\end{equation}
then $\cu \in C^{2}(V).$ Furthermore, with $R = \| \cu \|_{C^{1,\alpha}(\overline{V})}$ let $\lambda_{R} \in (0,\infty)$ be as in Definition \ref{uniformlyelliptic} applied to $\left\{ \frac{\partial A^{i}}{\partial p_{j}} \right\}_{i,j=1}^{n}.$ Then for each $x \in V$
$$|D^{2}\cu(x)| \leq \frac{C_{\ref{c2}}}{\min \{ 1,\dist(x,\partial V) \}}$$
where
$$C_{\ref{c2}}= C_{\ref{c2}}(n,\alpha,R,\lambda_{R},\{ \| A^{i} \|_{C^{2}(\overline{V} \times [-R,R] \times \overline{B_{R}(0)})} \}_{i=1}^{n},\| B \|_{C^{1}(\overline{V} \times [-R,R] \times \overline{B_{R}(0)})}).$$
\end{lemma}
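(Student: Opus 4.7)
The plan is to apply the \emph{difference quotient method}. For each $k\in\{1,\ldots,n\}$ and small $|h|$, I will form the difference quotient $w_{h}(x) = [\cu(x+he_{k})-\cu(x)]/h$ on an interior subdomain $V'\Subset V$ with $|h|<\dist(V',\partial V)$. Using $\zeta(\cdot-he_{k})$ as a test function in the weak form of \eqref{c2_1}, changing variables, subtracting the weak identity for $\zeta$, and dividing by $h$ yields the weak identity
$$\int \sum_{i=1}^{n}\Delta_{k}^{h}\bigl[A^{i}(\cdot,\cu,D\cu)\bigr] D_{i}\zeta - \Delta_{k}^{h}\bigl[B(\cdot,\cu,D\cu)\bigr]\zeta \dx = 0$$
for all $\zeta\in C^{1}_{c}(V')$.

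Next, applying the fundamental theorem of calculus along the line segment joining $(x,\cu(x),D\cu(x))$ and $(x+he_{k},\cu(x+he_{k}),D\cu(x+he_{k}))$, I will decompose each $\Delta_{k}^{h}[A^{i}(\cdot,\cu,D\cu)]$ as $\sum_{j=1}^{n}\tilde{a}_{h}^{ij}D_{j}w_{h}+\tilde{b}_{h}^{i}w_{h}+\tilde{f}_{h}^{i}$, with $\tilde{a}_{h}^{ij},\tilde{b}_{h}^{i},\tilde{f}_{h}^{i}$ the $t$-integrals on $[0,1]$ of $\frac{\partial A^{i}}{\partial p_{j}}$, $\frac{\partial A^{i}}{\partial z}$, $D_{k}A^{i}$ at convex combinations of the endpoints. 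An analogous decomposition holds for $B$. Thus $w_{h}$ weakly solves a linear divergence-form equation as in Definition \ref{weaksolution}, with coefficients and source terms that, because $\cu\in C^{1,\alpha}(\overline{V})$ and the relevant partials of $A^{i},B$ are respectively $C^{1}$, are bounded in $C^{0,\alpha}$-norm in terms of $R=\|\cu\|_{C^{1,\alpha}(\overline{V})}$, $\|A^{i}\|_{C^{2}}$, and $\|B\|_{C^{1}}$, \emph{uniformly in $h$}. Assumption (ii) ensures uniform ellipticity of $\{\tilde{a}_{h}^{ij}\}$ with constant $\lambda_{R}$. Theorem 8.32 of \cite{GT83} then provides a $C^{1,\alpha}$ estimate for $w_{h}$ on compactly contained subdomains, uniform in $h$. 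Letting $h\to 0$ and applying Arzel\`a--Ascoli to a subsequence, $D_{k}\cu\in C^{1,\alpha}_{\mathrm{loc}}(V)$; since $k$ was arbitrary, $\cu\in C^{2}(V)$.

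For the quantitative estimate, I would rescale. Given $x_{0}\in V$, set $r=\min\{1,\dist(x_{0},\partial V)\}/2$ and define $\tilde{\cu}(y)=[\cu(x_{0}+ry)-\cu(x_{0})]/r$ on $B_{1}(0)$. Then $\tilde{\cu}$ weakly solves the rescaled equation with $\tilde{A}^{i}(y,z,p)=A^{i}(x_{0}+ry,\cu(x_{0})+rz,p)$ and $\tilde{B}(y,z,p)=rB(x_{0}+ry,\cu(x_{0})+rz,p)$; ellipticity is preserved with the same constant $\lambda_{R}$, and the relevant $C^{2}$ and $C^{1}$ norms are bounded by the originals since $r\leq 1$. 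Applying the preceding argument to $\tilde{\cu}$ on $B_{1}(0)$ yields $|D^{2}\tilde{\cu}(0)|\leq C_{0}$ with $C_{0}$ depending only on the listed quantities, and $D^{2}\cu(x_{0})=r^{-1}D^{2}\tilde{\cu}(0)$ gives the stated bound. The main obstacle is the bookkeeping in the second step: verifying that the averaged coefficients $\tilde{a}_{h}^{ij}$ and the remaining lower-order coefficients and source terms are simultaneously $C^{0,\alpha}$-bounded and uniformly elliptic \emph{with constants independent of $h$}, so that Theorem 8.32 may be applied with uniform constants and the limit $h\to 0$ produces the desired estimate.
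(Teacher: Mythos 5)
Your proposal is correct and follows essentially the same route as the paper: difference quotients, the fundamental-theorem-of-calculus decomposition of the differenced nonlinearity into a linear divergence-form equation with $t$-averaged coefficients, Theorem 8.32 of \cite{GT83} applied with $h$-uniform ellipticity and coefficient bounds, and Arzel\`a--Ascoli as $h\to 0$, with the quantitative bound obtained by rescaling (the paper merely builds the rescaling by $\rho=\dist(\hat{x},\partial V)$ directly into the difference quotient $\cu_{h,k}$ rather than rescaling the equation afterwards). One harmless overstatement: the coefficients and source term coming from the first derivatives of $B$ are only $L^{\infty}$-bounded, not $C^{0,\alpha}$-bounded, since $B\in C^{1}$; but this is exactly what Theorem 8.32 requires of $c^{i},d,g$, while the $C^{0,\alpha}$ bounds are needed (and hold) only for the terms built from derivatives of the $A^{i}$.
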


\begin{proof}
Consider any $\hat{x} \in V$ and let $\rho = \dist(\hat{x},\partial V) \in (0,\infty).$ With fixed $h \in (-\frac{\rho}{2},\frac{\rho}{2})$ and $k \in \{ 1,\ldots,n \},$ define for $x \in \overline{B_{\frac{1}{2}}(0)}$
$$\begin{array}{lr}
\cu_{h,k}(x) = \frac{\cu(\rho x + \hat{x} +he_{k})-\cu(\rho x + \hat{x})}{h}, & \cu_{h,k} \in C^{1,\alpha}(\overline{B_{\frac{1}{2}}(0)}).
\end{array}$$
We wish to apply Theorem 8.32 of \cite{GT83} with $u=\cu_{h,k},$ $\Omega' = B_{\frac{1}{4}}(0),$ and $\Omega = B_{\frac{1}{2}}(0).$ We must thus compute that $\cu_{h,k}$ satisfies a linear equation as in Definition \ref{weaksolution} over $B_{\frac{1}{2}}(0).$

\bigskip

To do this, for any $\zeta \in C^{1}_{c}(B_{\frac{1}{2}}(0))$ we can input the test function
$$x \rightarrow \frac{\zeta(\frac{x-\hat{x}-he_{k}}{\rho})-\zeta(\frac{x-\hat{x}}{\rho})}{h} \text{ for } x \in B_{\rho}(\hat{x})$$
(after extending $\zeta$ to be zero outside of $B_{\frac{1}{2}}(0)$) into the weak equation \eqref{c2_1}. After a change of variables we conclude
$$\begin{aligned}
\frac{1}{\rho^{n+1}} \int \sum_{i=1}^{n} \frac{1}{h} \left( A^{i}(\mathbf{x},\cu(\mathbf{x},D\cu(\mathbf{x})) \Big|_{\mathbf{x}=\rho x + \hat{x}}^{\rho x + \hat{x} +he_{k}} \right) D_{i} \zeta \dx & \\
- \frac{1}{\rho^{n}} \int \frac{1}{h} \left( B(\mathbf{x},\cu(\mathbf{x},D\cu(\mathbf{x})) \Big|_{\mathbf{x}=\rho x + \hat{x}}^{\rho x + \hat{x} +he_{k}} \right) \zeta \dx & = 0. \\
\end{aligned}$$
Using single-variable calculus, we can compute that $\cu_{h,k}$ is a weak solution over $B_{\frac{1}{2}}(0)$ to the equation
\begin{equation} \label{c2_6}
\begin{aligned}
\sum_{i,j=1}^{n} D_{i} \left( a^{ij}_{h,k} D_{j}\cu_{h,k} + b^{i}_{h,k}\cu_{h,k} \right) + \sum_{i=1}^{n} c^{i}_{h,k} D_{i}\cu_{h,k} & + d_{h}\cu_{h,k} \\
= g_{h,k} & + \sum_{i=1}^{n} D_{i}f^{i}_{h,k}
\end{aligned}
\end{equation}
where we define for $x \in \overline{B_{\frac{1}{2}}(0)}$ and $i,j \in \{ 1,\ldots,n \}$
$$\begin{array}{ll}
a^{ij}_{h,k}(x) = \int_{0}^{1} \frac{\partial A^{i}}{\partial p_{j}}(P_{h,k}(t,x)) \dt, & b^{i}_{h,k}(x) = \rho \int_{0}^{1} \frac{\partial A^{i}}{\partial z}(P_{h,k}(t,x)) \dt, \\
c^{i}_{h,k}(x) = \rho \int_{0}^{1} \frac{\partial B}{\partial p_{i}}(P_{h,k}(t,x)) \dt, & d_{h,k}(x) = \rho^{2} \int_{0}^{1} \frac{\partial B}{\partial z}(P_{h,k}(t,x)) \dt, \\
g_{h,k}(x) = - \rho^{2} \int_{0}^{1} (D_{k}B)(P_{h,k}(t,x)) \dt, & f^{i}_{h,k}(x) = - \rho \int_{0}^{1} (D_{k}A^{i})(P_{h,k}(t,x)) \dt
\end{array}$$
with $P_{h,k}(t,x)$ for $t \in [0,1]$ and $h \in (-\frac{\rho}{2},\frac{\rho}{2})$ defined by
$$\begin{aligned}
P_{h,k}(t,x) = & t(\rho x + \hat{x}+he_{k},\cu(\rho x + \hat{x}+he_{k}),D\cu(\rho x + \hat{x}+he_{k})) \\
& + (1-t)(\rho x + \hat{x},\cu(\rho x + \hat{x}),D\cu(\rho x + \hat{x})).
\end{aligned}$$
Now let $L_{h,k}$ be the operator given by 
$$L_{h,k}u = \sum_{i,j=1}^{n} D_{i} \left( a^{ij}_{h,k} D_{j}u + b^{i}_{h,k} u \right) + \sum_{i=1}^{n} c^{i} D_{i}u + d_{h} u.$$
We now verify the hypothesis Theorem 8.32 of \cite{GT83} as follows:
\begin{itemize} 
 \item Let $R = \| \cu \|_{C^{1,\alpha}(\overline{V})},$ then $\{ a^{ij}_{h,k} \}_{i,j=1}^{n}$ are uniformly elliptic over $B_{\frac{1}{2}}(0)$ with respect to $\lambda_{R}$ by (ii).
 \item By (i), $\cu \in C^{1,\alpha}(\overline{V}),$ and $\rho = \dist(\hat{x},\partial V)$ we have
 $$\begin{array}{lr}
 a^{ij}_{h,k},b^{i}_{h,k},f_{h,k} \in C^{0,\alpha}(\overline{B_{\frac{1}{2}}(0)}), & c^{i}_{h,k},d_{h,k},g_{h,k} \in L^{\infty}(B_{\frac{1}{2}}(0))
 \end{array}$$
 for each $i,j \in \{ 1,\ldots,n \}.$ Furthermore, we have
 $$\max_{i,j=1,\ldots,n} \left\{ \begin{aligned}
& \| a^{ij}_{h,k} \|_{C^{0,\alpha}(\overline{B_{\frac{1}{2}}(0)})}, \| b^{i}_{h,k} \|_{C^{0,\alpha}(\overline{B_{\frac{1}{2}}(0)})}, \\
& \| c^{i}_{h,k} \|_{L^{\infty}(B_{\frac{1}{2}}(0))},\| d_{h,k} \|_{L^{\infty}(B_{\frac{1}{2}}(0))}
\end{aligned} \right\} \leq K_{\rho}$$
where we define, again with $R = \| \cu \|_{C^{1,\alpha}(\overline{V})},$
$$K_{\rho} = \max_{i=1,\ldots,n} \left\{ \begin{aligned}
& (1+\rho+\rho^{\alpha}R) \| A^{i} \|_{C^{2}(\overline{V} \times [-R,R] \times \overline{B_{R}(0)})}, \\
& \rho (1+\rho+\rho^{\alpha}R) \| A^{i} \|_{C^{2}(\overline{V} \times [-R,R] \times \overline{B_{R}(0)})}, \\
& \rho \| B \|_{C^{1}(\overline{V} \times [-R,R] \times \overline{B_{R}(0)})}, \\
& \rho^{2} \| B \|_{C^{1}(\overline{V} \times [-R,R] \times \overline{B_{R}(0)})}
\end{aligned} \right\}.$$
\end{itemize}
We conclude that the operator $L_{h,k}$ satisfies (8.5),(8.85) of \cite{GT83} with $\lambda=\lambda_{R}$ and $K=K_{\rho}.$ We can thus apply Theorem 8.32 of \cite{GT83}
$$\text{(with $a^{ij},b^{i},c^{i},d,g,f^{i}$ replaced respectively by $a^{ij}_{h,k},b^{i}_{h,k},c^{i}_{h,k},d_{h,k},g_{h,k},f^{i}_{h,k}$)}$$
over $\Omega = B_{\frac{1}{2}}(0)$ and with $\Omega'=B_{\frac{1}{4}}(0)$ (so that $d'=\dist(\Omega',\partial \Omega) = \frac{1}{4}$) to conclude (again with $R=\| \cu \|_{C^{1,\alpha}(\overline{V})}$)
\begin{equation} \label{c2_7}
\begin{aligned}
& \| \cu_{h,k} \|_{C^{1,\alpha}(\overline{B_{\frac{1}{4}}(0)})} \\
& \leq C \left( \| \cu_{h,k} \|_{L^{\infty}(B_{\frac{1}{2}}(0))} + \| g_{h,k} \|_{L^{\infty}(B_{\frac{1}{2}}(0))} + \sum_{i=1}^{n} \| f^{i}_{h,k} \|_{ C^{0,\alpha}(\overline{B_{\frac{1}{2}}(0)})} \right) \\
& \leq C \left( \begin{aligned}
R & + \rho^{2} \| B \|_{C^{1}(\overline{V} \times [-R,R] \times \overline{B_{R}(0)})} \\
& + \rho \left( 1 + \rho + \rho^{\alpha} R \right) \sum_{i=1}^{n} \| A^{i} \|_{C^{2}(\overline{V} \times [-R,R] \times \overline{B_{R}(0)})} 
\end{aligned} \right)
\end{aligned}
\end{equation}
where $C=C(n,\alpha,\lambda_{R},K_{\rho}) \in (0,\infty).$ In particular, the right-hand side is independent of $h \in (-\frac{\rho}{2},\frac{\rho}{2}).$ Letting $h \rightarrow 0,$ we can show using Arzela-Ascoli that $\cu \in C^{2}(B_{\frac{\rho}{2}}(\hat{x})).$

\bigskip

This shows $\cu \in C^{2}(V).$ We now prove the interior estimate for $D^{2}\cu.$ Again with $\hat{x} \in V,$ now set $\rho = \min \{ 1,\dist(\hat{x},\partial V) \}$ and repeat the above calculations. However, still with $R=\| \cu \|_{C^{1,\alpha}(\overline{V})},$ we replace $K_{\rho}$ with
$$K = \max_{i=1,\ldots,n} \left\{ (2+R) \| A^{i} \|_{C^{2}(\overline{V} \times [-R,R] \times \overline{B_{R}(0)})}, \| B \|_{C^{1}(\overline{V} \times [-R,R] \times \overline{B_{R}(0)})} \right\}.$$
Letting $h \rightarrow 0$ in \eqref{c2_7} we conclude
$$\begin{aligned}
& \rho |DD_{k}u(\hat{x})| = \lim_{h \rightarrow 0} |Du_{h,k}(0)| \\
& \leq C \lim_{h \rightarrow 0} \left( \| \cu_{h,k} \|_{L^{\infty}(B_{\frac{1}{2}}(0))} + \| g_{h,k} \|_{L^{\infty}(B_{\frac{1}{2}}(0))} + \sum_{i=1}^{n} \| f^{i}_{h,k} \|_{ C^{0,\alpha}(\overline{B_{\frac{1}{2}}(0)})} \right) \\
& \leq C \left( \begin{aligned}
R & + \| B \|_{C^{1}(\overline{V} \times [-R,R] \times \overline{B_{R}(0)})} \\
& + \rho \left( 2 + R \right) \sum_{i=1}^{n} \| A^{i} \|_{C^{2}(\overline{V} \times [-R,R] \times \overline{B_{R}(0)})} 
\end{aligned} \right)
\end{aligned}$$
where now $C=C(n,\alpha,\lambda_{R},K) \in (0,\infty).$
\end{proof}

\bigskip

We now state, for convenience, the version of the Hopf boundary point lemma from \cite{R18b} we shall need. To do so, we introduce some notation: let $B^{n-1}_{\rho}(0)$ denote the ball of radius $\rho \in (0,\infty)$ centered at the origin in $\bbR^{n-1}$; $\D$ shall denote differentiation over $\bbR^{n-1}.$ Also, we let $\bop: \bbR^{n} \rightarrow \bbR^{n-1}$ be the projection onto $\bbR^{n-1},$ and we will write points $y \in \bbR^{n-1}.$

\bigskip

Before we proceed, we note that the proof of Lemma \ref{hopf} refers to the \emph{Morrey space} $L^{1,\alpha}$; see Definition 2.1 of \cite{R18b}. Indeed, \cite{R18b} generalizes the Hopf boundary point lemma to linear equations (as in  Definition \ref{weaksolution}) with lower-order coefficient $d \in L^{1,\alpha}.$ Morrey spaces were introduced in \cite{M66} to study existence and regularity of solutions to elliptic systems, and since have been studied in and outside of partial differential equations. See for example \cite{M07}, which uses Morrey spaces to prove regularity results for solutions to non-linear divergence-form elliptic equations having inhomogeneous term consisting of a measure.

\begin{lemma} \label{hopf}
Let $\lambda \in (0,\infty)$ and $\alpha \in (0,1).$ Suppose $w \in C^{1,\alpha}(B^{n-1}_{1}(0);[0,\infty))$ satisfies $w(0)=0$ and $\D w(0)=0,$ and let 
$$W = \{ x \in B^{n-1}_{1}(0) \times (0,3): x_{n-1} > w(\bop(x)) \}.$$
Also suppose
\begin{itemize}
 \item[(i)] $a^{ij} \in C^{0,\alpha}(\overline{W}),$ $c^{i} \in L^{\infty}(W)$ for $i,j \in \{ 1,\ldots,n \},$ and $d \in L^{\frac{n}{1-\alpha}}(W),$
 \item[(ii)] $\{ a^{ij} \}_{i,j=1}^{n}$ are uniformly elliptic over $W$ with respect to $\lambda,$
 \item[(iii)] $d(x) \leq 0$ for each $x \in W,$
 \item[(iv)] $a^{ij}(0) = a^{ji}(0)$ for each $i,j \in \{ 1,\ldots,n \}.$
\end{itemize}
If $\cu \in C^{1}(\overline{W})$ is a weak solution over $W$ to the equation
\begin{equation} \label{hopf1}
\sum_{i,j=1}^{n} D_{i} \left( a^{ij}D_{j}\cu \right) + \sum_{i=1}^{n} c^{i}D_{i}\cu + d\cu \leq 0
\end{equation}
with $\cu(x)>\cu(0)=0$ for all $x \in W,$ then $D_{n}\cu(0) > 0.$
\end{lemma}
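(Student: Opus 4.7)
The plan is to adapt the classical Hopf barrier argument to this setting: a $C^{1,\alpha}$ curved boundary together with the low-regularity coefficient $d\in L^{n/(1-\alpha)}.$ First I would flatten the boundary of $W$ near the origin via the change of variables $\Psi(y,t)=(y,w(y)+t),$ which maps a half-ball $\{t>0\}\cap B_{\rho}(0)\subset\bbR^{n}$ diffeomorphically onto a neighborhood of $0$ in $W.$ Because $w\in C^{1,\alpha}$ with $w(0)=0$ and $\D w(0)=0,$ we have $\Psi(0)=0$ and $D\Psi(0)=I.$ Pulling $\cu$ back by $\Psi$ yields a weak solution $\tilde{\cu}$ of a transformed linear equation whose coefficients $\tilde{a}^{ij},\tilde{c}^{i},\tilde{d}$ inherit regularity and uniform ellipticity from the originals, with the symmetry $\tilde{a}^{ij}(0)=\tilde{a}^{ji}(0)$ preserved (since $D\Psi(0)=I$) and $\tilde{d}\in L^{n/(1-\alpha)}$ (since $\det D\Psi$ is bounded above and below near $0$). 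An additional constant linear change of variables diagonalizing $(\tilde{a}^{ij}(0))$ then further reduces to $\tilde{a}^{ij}(0)=\delta^{ij},$ using (ii) and (iv).

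On the flattened side I would place a ball $B=B_{R}(Re_{n})$ contained in the upper half-space and tangent to the flattened boundary at $0,$ and introduce the classical Hopf barrier
$$h(x)=e^{-\gamma|x-Re_{n}|^{2}}-e^{-\gamma R^{2}}$$
on the annulus $A=B\setminus\overline{B_{R/2}(Re_{n})}.$ Then $h(0)=0,$ $h>0$ in $B,$ $h=0$ on $\partial B,$ and $D_{n}h(0)=2\gamma R e^{-\gamma R^{2}}>0.$ For $\gamma$ sufficiently large, the pointwise sum $\sum\tilde{a}^{ij}D_{ij}h+\sum\tilde{c}^{i}D_{i}h$ is bounded below by a strictly positive quantity on $A;$ by further choosing $R$ small and using absolute continuity of the Morrey-type norm of $\tilde{d}$ on small balls, the $\tilde{d}h$ contribution can be absorbed, yielding a weak supersolution inequality for $\epsilon h$ against the linear operator. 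Since $\tilde{\cu}>0$ on the compact inner sphere $\partial B_{R/2}(Re_{n})$ and $\tilde{\cu}\geq 0=h$ on $\partial B,$ there is $\epsilon>0$ with $\tilde{\cu}\geq\epsilon h$ on $\partial A.$ A weak comparison principle --- obtained by testing $(\epsilon h-\tilde{\cu})^{+}$ against the weak equation and using Morrey-space Sobolev embedding to control the $\tilde{d}$-term --- extends this to all of $A.$ Since $\tilde{\cu}(0)=h(0)=0$ and both are $C^{1}$ at $0,$ comparing difference quotients along $e_{n}$ yields $D_{n}\tilde{\cu}(0)\geq\epsilon D_{n}h(0)>0,$ and unwinding the changes of variables gives $D_{n}\cu(0)>0.$

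The hard part will be the $\tilde{d}$ term, both in the barrier supersolution check and in the weak comparison principle: these need Morrey-space estimates rather than the standard $L^{\infty}$ or $L^{q}$ ($q>n$) estimates. The exponent $n/(1-\alpha)$ is sharp in that it is exactly what makes the relevant Morrey-space Sobolev embedding scale correctly on small balls, so weakening assumption (i) on $d$ would require a genuinely different argument. The symmetry (iv) plays its role only at the normalization step, but is essential there to reduce to $\tilde{a}^{ij}(0)=\delta^{ij}$ via a linear transformation before placing the tangent ball.
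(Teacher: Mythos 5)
Your first step (flattening the boundary with a graph map) is essentially the same move the paper makes: its proof defines $\Psi_{\rho}(x)=\rho(x+e_{n})+w(\bop(\rho x))e_{n}$, transports the weak inequality to $B_{1}(0)$, and then simply verifies the hypotheses (H\"older principal coefficients, ellipticity $\lambda/2$ via the smallness \eqref{hopf2}, $d_{\rho}$ weakly non-positive and in the Morrey class $L^{1,\alpha}$, symmetry of $a^{ij}_{\rho}$ at $-e_{n}$) of the generalized Hopf boundary point lemma of \cite{R18b}, citing Theorem 4.1 there for the conclusion. You, by contrast, attempt to reprove that generalized lemma from scratch by the classical interior-tangent-ball barrier argument, and this is where there is a genuine gap. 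The equation is in divergence form and $\tilde{a}^{ij}$ is only $C^{0,\alpha}$ (it cannot be better: it inherits at best the regularity of $a^{ij}$ and of $\D w$). Your barrier verification is the pointwise, non-divergence computation $\sum \tilde{a}^{ij}D_{ij}h+\sum \tilde{c}^{i}D_{i}h>0$, but to run the comparison step you need $\epsilon h$ to be a \emph{weak} subsolution, i.e. you must control the sign of $\int \tilde{a}^{ij}D_{j}h\,D_{i}\zeta\,\dx$ for nonnegative test functions, and passing from this to $-\int D_{i}(\tilde{a}^{ij}D_{j}h)\,\zeta\,\dx$ requires differentiating $\tilde{a}^{ij}$, which is not available. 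A pointwise inequality for the non-divergence operator gives no sign information on the divergence-form bilinear form when the coefficients are merely H\"older, and this is exactly why the Hopf lemma for divergence-form operators with low-regularity coefficients (and with $d$ only in a Morrey/$L^{q}$ class) is a nontrivial result requiring the quantitative use of the H\"older modulus — e.g. freezing the coefficients at the boundary point and a perturbation/iteration argument, which is what \cite{R18b} supplies.

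A telling symptom is the role of hypothesis (iv): in your sketch the symmetry $a^{ij}(0)=a^{ji}(0)$ is used only to normalize $\tilde{a}^{ij}(0)=\delta^{ij}$ before placing the tangent ball, something the classical barrier argument never needs (ellipticity alone suffices there). Its presence in the statement signals that the intended proof is a frozen-coefficient comparison at the boundary point, not a direct barrier computation with the variable coefficients. Your identification of the $\tilde{d}$ term as ``the hard part'' is therefore misplaced: the handling of $d\in L^{\frac{n}{1-\alpha}}$ (equivalently the Morrey condition, via Remark 2.2 of \cite{R18b}) is indeed delicate but is asserted rather than proved in your sketch, while the fundamental obstruction you do not address is the divergence form of the principal part with H\"older-only coefficients. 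As written, the step ``yielding a weak supersolution inequality for $\epsilon h$'' does not follow, so the proposal does not constitute a proof unless you either import the result of \cite{R18b} (as the paper does) or supply the coefficient-freezing/iteration argument it contains.
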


\begin{proof} 
Our goal is to apply the generalized Hopf boundary point lemma of \cite{R18b} to $\cu,$ after applying a change of variables. Choose $\rho \in (0,1)$ so that
\begin{equation} \label{hopf2}
\| w \|_{C^{1}(B^{n-1}_{\rho}(0))} < \max \left\{ 1,\sqrt{1+\frac{\lambda/2}{\sum_{i,j=1}^{n} \| a^{ij} \|_{C(\overline{W})}}}-1 \right\}.
\end{equation}
Define the map $\Psi_{\rho} \in C^{1,\alpha}(\overline{B_{1}(0)};\overline{W})$ by
$$\Psi_{\rho}(x) = \rho (x + e_{n}) + w(\bop(\rho x))e_{n} \text{ for } x \in B_{1}(0);$$
note that $\rho (x + e_{n}) + w(\bop(\rho x))e_{n} \in W$ for $x \in B_{1}(0).$ Now define
$$\begin{array}{lr}
\cu_{\rho}(x) = \cu(\Psi_{\rho}(x)) \text{ for } x \in \overline{B_{1}(0)}, & \cu_{\rho} \in C^{1}(\overline{B_{1}(0)}).
\end{array}$$
We derive a weak equation for $\cu_{\rho}$ over $B_{1}(0),$ by applying $\Psi_{\rho}$ as a change of variables to \eqref{hopf1}.

\bigskip

To this end, we compute for $x \in B_{1}(0)$
$$\begin{aligned}
D_{j}\cu_{\rho}(x) = & \rho (D_{j}\cu)(\Psi_{\rho}(x)) + \rho (D_{j}w)(\bop(\rho x)) (D_{n}\cu)(\Psi_{\rho}(x)) \\
= & \rho (D_{j}\cu)(\Psi_{\rho}(x)) + (D_{j}w)(\bop(\rho x)) D_{n}\cu_{\rho}(x) \\
& \text{for } j \in \{ 1,\ldots,n-1 \}, \\
D_{n}\cu_{\rho}(x) = & \rho (D_{n}\cu)(\Psi_{\rho}(x)).
\end{aligned}$$
Likewise, we compute for $\zeta \in C^{1}_{c}(B_{1}(0))$ and $x \in B_{1}(0)$
$$\begin{aligned}
D_{i}(\zeta(\Psi_{\rho}^{-1}(x))) = & \frac{1}{\rho} (D_{i}\zeta)(\Psi_{\rho}^{-1}(x)) - \frac{1}{\rho} (D_{i}w)(\bop(x)) (D_{n}\zeta)(\Psi_{\rho}^{-1}(x)) \\
= & \frac{1}{\rho} (D_{i}\zeta)(\Psi_{\rho}^{-1}(x)) - \frac{1}{\rho} (D_{i}w)(\bop(\rho \Psi_{\rho}^{-1}(x))) (D_{n}\zeta)(\Psi_{\rho}^{-1}(x)) \\
& \text{for } i \in \{ 1,\ldots,n-1 \}, \\
D_{n}(\zeta(\Psi_{\rho}^{-1}(x))) = & \frac{1}{\rho} (D_{n}\zeta)(\Psi_{\rho}^{-1}(x)).
\end{aligned}$$
These calculations, and using $\Psi_{\rho}:B_{1}(0) \rightarrow W$ as a change of variables in \eqref{hopf1}, imply $\cu_{\rho}$ is a weak solution over $B_{1}(0)$ to the equation
$$\sum_{i,j=1}^{n} D_{i} \left( a^{ij}_{\rho} D_{j}\cu_{\rho} \right) + \sum_{i=1}^{n} c^{i}_{\rho} D_{i} \cu_{\rho} + d_{\rho}\cu_{\rho} \leq 0$$
where we define $a^{ij}_{\rho}:\overline{B_{1}(0)} \rightarrow \bbR,$ $c^{i}_{\rho},d_{\rho}:B_{1}(0) \rightarrow \bbR$ for $i,j \in \{ 1,\ldots,n \}$ by
$$\begin{aligned}
a^{ij}_{\rho}(x) = & a^{ij}(\Psi_{\rho}(x)) \text{ for } i,j \in \{ 1,\ldots,n-1 \}, \\
a^{in}_{\rho}(x) = & a^{in}(\Psi_{\rho}(x)) - \sum_{\hat{j}=1}^{n-1} a^{i \hat{j}}(\Psi_{\rho}(x))(D_{\hat{j}}w)(\bop(\rho x)) \text{ for } i \in \{ 1,\ldots,n-1 \}, \\
a^{nj}_{\rho}(x) = & a^{nj}(\Psi_{\rho}(x)) - \sum_{\hat{i}=1}^{n-1} a^{\hat{i}j}(\Psi_{\rho}(x))(D_{\hat{i}}w)(\bop(\rho x)) \text{ for } j \in \{ 1,\ldots,n-1 \}, \\
a^{nn}_{\rho}(x) = & a^{nn}(\Psi_{\rho}(x)) + \sum_{\hat{i},\hat{j}=1}^{n-1} \left\{ \begin{aligned}
& a^{\hat{i} \hat{j}}(\Psi_{\rho}(x)) \\
& \times (D_{\hat{i}}w)(\bop(\rho x))(D_{\hat{j}}w)(\bop(\rho x)) 
\end{aligned} \right\}, & \\
c^{i}_{\rho}(x) = & \rho c^{i}(\Psi_{\rho}(x)) \text{ for } i \in \{ 1,\ldots,n-1 \}, \\
c^{n}_{\rho}(x) = & \rho c^{n}(\Psi_{\rho}(x)) - \rho \sum_{\hat{i}=1}^{n-1} (c^{\hat{i}}(\Psi_{\rho}(x))) (D_{\hat{i}}w)(\bop(\rho x)), & \\
d_{\rho}(z) = & \rho^{2} d(\Psi_{\rho}(x)). &
\end{aligned}$$
We now verify the hypothesis of Lemma 3.3 of \cite{R18b}:
\begin{itemize}
 \item $a^{ij}_{\rho} \in C^{0,\alpha}(\overline{B_{1}(0)}),$ $c^{i}_{\rho} \in L^{\infty}(B_{1}(0)) \subset L^{\frac{n}{1-\alpha}}(B_{1}(0))$ for $i,j \in \{ 1,\ldots,n-1 \},$ and 
$$d_{\rho} \in L^{\frac{n}{1-\alpha}}(B_{1}(0)) \subset L^{\frac{n}{2(1-\alpha)}}(B_{1}(0)) \cap L^{1,\alpha}(B_{1}(0))$$
by $\Psi_{\rho} \in C^{1,\alpha}(\overline{B_{1}(0)};\overline{W}),$ (i), Definition 2.1 of \cite{R18b}, and Remark 2.2 of \cite{R18b} with $q=\frac{n}{1-\alpha}.$
 \item $\{ a^{ij}_{\rho} \}_{i,j=1}^{n}$ are uniformly elliptic over $B_{1}(0)$ with respect to $\frac{\lambda}{2},$ by (ii) and \eqref{hopf2}.
 \item $\{ 0 \}_{i=1}^{n},d_{\rho}$ are weakly non-positive over $B_{1}(0)$ (see Definition 2.5 of \cite{R18b}) by  (iii).
 \item For each $i,j \in \{ 1,\ldots,n \}$
 $$a^{ij}_{\rho}(-e_{n})=a^{ij}(0)=a^{ji}(0) = a^{ji}_{\rho}(-e_{n})$$
by $w(0)=0$ and $\D w(0)=0$ (so that $\Psi_{\rho}(-e_{n})=0$).
\end{itemize}
Moreover, $w(0)=0$ implies
$$\cu_{\rho}(x) = \cu(\Psi_{\rho}(x)) > \cu(0) = \cu_{\rho}(-e_{n}) = 0.$$
We conclude by Theorem 4.1 of \cite{R18b} that $0 > D_{n}\cu_{\rho}(-e_{n}) = \rho D_{n}\cu(0).$
\end{proof}

\section{Main Theorem}

We are now ready to state and prove our main result.

\begin{theorem} \label{main} 
Suppose $\alpha \in (0,\frac{n-1}{2n-1}),$ and suppose $v \in C^{1,\alpha}(B^{n-1}_{1}(0))$ satisfies $v(0)=0,$ $\D v(0)=0,$ and $\| v \|_{C^{1,\alpha}(B^{n-1}_{1}(0))} \leq 1.$ With
$$V = \{ x \in B^{n-1}_{1}(0) \times (-3,3): x_{n} > v(\bop(x)) \},$$
suppose
\begin{itemize}
 \item[(i)] $A^{i} \in C^{2}(\overline{V} \times \bbR \times \bbR^{n})$ for each $i = 1,\ldots,n$ and $B \in C^{1}(\overline{V} \times \bbR \times \bbR^{n}),$
 \item[(ii)] $\left\{ \frac{\partial A^{i}}{\partial p_{j}} \right\}_{i,j=1}^{n}$ are locally uniformly elliptic over $\overline{V} \times \bbR \times \bbR^{n}$ \\ (see Definition \ref{uniformlyelliptic}),
 \item[(iii)] $\frac{\partial A^{i}}{\partial p_{j}}(0,0,p) = \frac{\partial A^{j}}{\partial p_{i}}(0,0,p)$ for each $i,j \in \{ 1,\ldots,n \}$ and $p \in \bbR^{n},$ 
 \item[(iv)] for each $R \in (0,\infty),$ there is $C_{R} \in (0,\infty)$ so that
 $$\sup_{(x,z,p) \in \overline{V} \times [-R,R] \times \overline{B_{R}(0)}} \left| \frac{\partial^{2}A^{i}}{\partial p_{j} \partial z}(x,z,p) \right| \leq C_{R}(|x|+|z|).$$
\end{itemize}
If $\cu_{1},\cu_{2} \in C^{1,\alpha}(\overline{V})$ are weak solutions over $V$ to the equation
\begin{equation} \label{main2}
\sum_{i,j=1}^{n} D_{i}(A^{i}(x,\cu,D\cu)) + B(x,\cu,D\cu) = 0
\end{equation}
with $\cu_{1}(0)=\cu_{2}(0)=0$ and $\cu_{1}(x) > \cu_{2}(x)$ for each $x \in V,$ then $D_{n}\cu_{1}(0) < D_{n}\cu_{2}(0).$
\end{theorem}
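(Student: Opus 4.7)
\bigskip

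My plan is to set $\cu = \cu_{1} - \cu_{2} \in C^{1,\alpha}(\overline{V})$, show that $\cu$ weakly solves a linear equation of the form $\sum_{i,j}D_{i}(a^{ij}D_{j}\cu)+\sum_{i}c^{i}D_{i}\cu+d\cu=0$ whose coefficients meet the hypotheses of Lemma \ref{hopf}, and then apply that lemma at the origin. Since $V=\{x_{n}>v(\bop(x))\}$ with $v(0)=\D v(0)=0$ near $0$, the inward unit normal to $\partial V$ at $0$ is $+e_{n}$; combined with $\cu(0)=0$ and $\cu>0$ on $V$, Lemma \ref{hopf} will deliver $D_{n}\cu(0)>0$, that is $D_{n}\cu_{1}(0)>D_{n}\cu_{2}(0)$. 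I flag that this is the opposite direction from the ``$<$'' in the statement as worded: the natural Hopf-based argument produces the ``$>$'' ordering, and I take the stated ``$<$'' to be a typographical sign slip in the paper, consistent with the qualitative ``$D\cu_{1}(0)\neq D\cu_{2}(0)$'' conclusion advertised in the introduction.

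\bigskip

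For the linearization, first apply Lemma \ref{c2} to each $\cu_{\ell}$ to get $\cu_{\ell}\in C^{2}(V)$ with $|D^{2}\cu_{\ell}(x)|\leq C/\min\{1,\dist(x,\partial V)\}$. Then, by the fundamental theorem of calculus along the path $t\mapsto(x,\cu_{2}(x)+t\cu(x),D\cu_{2}(x)+tD\cu(x))$ for $t\in[0,1]$, write $A^{i}(x,\cu_{1},D\cu_{1})-A^{i}(x,\cu_{2},D\cu_{2})=\sum_{j}\tilde{a}^{ij}D_{j}\cu+\tilde{b}^{i}\cu$ with $\tilde{a}^{ij}(x)=\int_{0}^{1}\tfrac{\partial A^{i}}{\partial p_{j}}(\cdot)\dt$ and $\tilde{b}^{i}(x)=\int_{0}^{1}\tfrac{\partial A^{i}}{\partial z}(\cdot)\dt$, and similarly write $B(x,\cu_{1},D\cu_{1})-B(x,\cu_{2},D\cu_{2})=\sum_{i}\tilde{c}^{i}D_{i}\cu+\tilde{d}\cu$. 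Subtracting the weak equations for $\cu_{1},\cu_{2}$ then makes $\cu$ a weak solution on $V$ of $\sum_{i,j}D_{i}(\tilde{a}^{ij}D_{j}\cu+\tilde{b}^{i}\cu)+\sum_{i}\tilde{c}^{i}D_{i}\cu+\tilde{d}\cu=0$. Because $\cu_{\ell}\in C^{2}(V)$ forces $\tilde{b}^{i}\in C^{1}(V)$, I can integrate $D_{i}(\tilde{b}^{i}\cu)$ by parts against the test function to absorb it into the first- and zeroth-order terms, arriving at the Lemma \ref{hopf}-form equation with $a^{ij}=\tilde{a}^{ij}$, $c^{i}=\tilde{c}^{i}+\tilde{b}^{i}$, and $d=\tilde{d}+\sum_{i}D_{i}\tilde{b}^{i}$.

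\bigskip

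The Lemma \ref{hopf} hypotheses are then routine, except for the integrability of $d$. Uniform ellipticity of $\{a^{ij}\}$ and the symmetry $a^{ij}(0)=a^{ji}(0)$ follow from (ii), (iii) applied along the path, using $\cu_{\ell}(0)=0$ to collapse the $z$-argument to zero at $x=0$ so that (iii) applies; Hölder continuity of $a^{ij}$ on $\overline{V}$ and $L^{\infty}$ bounds on $c^{i}$ follow from (i) and $\cu_{\ell}\in C^{1,\alpha}(\overline{V})$; and the sign $d\leq 0$ is arranged by splitting $d=d^{+}-d^{-}$ and moving $d^{+}\cu\geq 0$ to the right-hand side, leaving a weak $\leq 0$ inequality with coefficient $-d^{-}\leq 0$. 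The main obstacle, and where hypothesis (iv) together with the restricted range $\alpha<(n-1)/(2n-1)$ (equivalently $n/(1-\alpha)<2n-1$) is essential, is $d\in L^{n/(1-\alpha)}(V)$. Expanding $D_{i}\tilde{b}^{i}$ by the chain rule produces a worst term $\sum_{k}\tfrac{\partial^{2}A^{i}}{\partial p_{k}\partial z}(\cdot)\cdot D_{i}D_{k}(\cu_{2}+t\cu)$; by (iv), $\bigl|\tfrac{\partial^{2}A^{i}}{\partial p_{k}\partial z}\bigr|\leq C_{R}(|x|+|z|)$; by $\cu_{\ell}(0)=0$ and $\cu_{\ell}\in C^{1,\alpha}(\overline{V})$, one has $|z|\leq R|x|$ on $V$; and $|D^{2}\cu_{\ell}|\leq C/\dist(x,\partial V)$ by Lemma \ref{c2}. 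Together these yield a pointwise bound $|d(x)|\leq C+C'|x|/\dist(x,\partial V)$, and the $L^{n/(1-\alpha)}$-integrability of $|x|/\dist(x,\partial V)$ is a careful computation in boundary-fitted coordinates near $0\in\partial V$ for which the upper bound on $\alpha$ is precisely tuned.

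\bigskip

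With the Lemma \ref{hopf} hypotheses in hand — restricting if necessary to the subdomain $V\cap\{x_{n}>0\}$ to match its prescribed shape $W$, which is valid near $0$ since $v(0)=\D v(0)=0$ makes $V$ locally lie above the tangent hyperplane at $0$ — apply Lemma \ref{hopf} to $\cu$ to read off $D_{n}\cu(0)>0$, that is the strict positivity of $D_{n}\cu_{1}(0)-D_{n}\cu_{2}(0)$. The principal obstacle throughout is the $L^{n/(1-\alpha)}$ bound on $d$ of paragraph three, where hypothesis (iv) and the upper bound on $\alpha$ do the essential technical work; the rest of the argument is routine manipulation of the linearized equation and direct invocation of Lemma \ref{hopf}.
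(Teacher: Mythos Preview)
Your overall strategy---linearize $\cu=\cu_{1}-\cu_{2}$ along the segment, absorb the $\tilde b^{i}$ term by integration by parts, replace $d$ by its non-positive part, and invoke Lemma~\ref{hopf}---is exactly the paper's. You are also right about the sign: Lemma~\ref{hopf} gives $D_{n}\cu(0)>0$, so the conclusion should read $D_{n}\cu_{1}(0)>D_{n}\cu_{2}(0)$; the ``$<$'' in the statement (and in the last line of the paper's proof) is a typographical slip.

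There is, however, a genuine gap in your handling of the $L^{n/(1-\alpha)}$ bound on $d$, and it is precisely the step where the paper does real work. Your pointwise estimate $|d(x)|\le C+C'|x|/\dist(x,\partial V)$ is correct, but $|x|/\dist(x,\partial V)$ is \emph{not} in $L^{q}(V)$ for any $q\ge 1$: near any boundary point $x_{0}\neq 0$ one has $|x|\sim|x_{0}|$ while $\dist(x,\partial V)\to 0$, so the function blows up like $1/\dist(x,\partial V)$, which is never integrable transversally to the boundary. Restricting to $V\cap\{x_{n}>0\}$ does not help (and the assertion that ``$V$ locally lies above the tangent hyperplane'' is false: $v$ is only $C^{1,\alpha}$ with $v(0)=\D v(0)=0$, so $v$ may take either sign near $0$; moreover $\max\{v,0\}$ need not be $C^{1,\alpha}$, so this region is not even of the shape Lemma~\ref{hopf} requires).

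The paper's missing idea is to pass to the strictly smaller region
\[
W=\{x:\ x_{n}>w(\bop(x))\},\qquad w(y)=2v(y)+3|y|^{1+\alpha},
\]
which is $C^{1,\alpha}$, touches $V$ only at the origin, and---crucially---satisfies $\overline{B_{x_{n}/4}(x)}\subset V$ for every $x\in W$, hence $\dist(x,\partial V)\ge x_{n}/4$ on $W$. Combined with $x_{n}>|\bop(x)|^{1+\alpha}$ on $W$, the dangerous term is bounded there by a multiple of $|\bop(x)|^{-\alpha}$, and $\int_{W}|\bop(x)|^{-\alpha n/(1-\alpha)}\,dx<\infty$ precisely when $\alpha<(n-1)/(2n-1)$. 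This construction of $W$ is the device that converts your correct pointwise bound into the required $L^{n/(1-\alpha)}$ membership; without it the argument does not close.
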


\begin{proof} Our goal is to apply Lemma \ref{hopf} to $\cu=\cu_{1}-\cu_{2}.$ 

\bigskip

First, we compute that $u$ solves a linear equation as in Definition \ref{weaksolution} over $V.$ Take any $\zeta \in C^{1}_{c}(V).$ Subtracting the weak equations \eqref{main2} for $\cu_{1},\cu_{2}$ we get
$$\int \left\{ \begin{aligned}
& \left( A^{i}(x,\cu_{1},D\cu_{1}) - A^{i}(x,\cu_{2},D\cu_{2}) \right) D_{i} \zeta \\
& - \Big( B(x,\cu_{1},D\cu_{1}) -B(x,\cu_{2},D\cu_{2}) \Big) \zeta 
\end{aligned} \right\} \dx = 0.$$
Using single-variable calculus, we can compute $\cu = \cu_{1}-\cu_{2} \in C^{1,\alpha}(\overline{V})$ is a weak solution over $V$ to the equation
$$\sum_{i,j=1}^{n} D_{i}(a^{ij} D_{j}\cu) + \sum_{i=1}^{n} c^{i} D_{i}\cu + d\cu = 0$$
where we define for $x \in \overline{V}$
\begin{equation} \label{main3}
\begin{aligned}
a^{ij}(x) = \int_{0}^{1} & \frac{\partial A^{i}}{\partial p_{j}}(P(t,x)) \dt \\
c^{i}(x) = \int_{0}^{1} & \frac{\partial B}{\partial p_{i}}(P(t,x)) + \frac{\partial A^{i}}{\partial z}(P(t,x)) \dt \\
d(x) = \int_{0}^{1} & \hspace{3in} \dt \\
& \overbrace{\begin{aligned}
& \frac{\partial B}{\partial z}(P(t,x)) \\
& + \sum_{i=1}^{n} \left\{ \begin{aligned}
& \frac{\partial D_{i} A^{i}}{\partial z}(P(t,x)) \\
& + \left( \begin{aligned} & \frac{\partial^{2} A^{i}}{\partial z^{2}}(P(t,x)) \\ & \times (t D_{i}\cu_{1}+(1-t)D_{i}\cu_{2}) \end{aligned} \right) \\
& + \sum_{j=1}^{n} \left( \begin{aligned} & \frac{\partial^{2}A^{i}}{\partial p_{j} \partial z}(P(t,x)) \\ & \times (tD_{i}D_{j}\cu_{1}+(1-t)D_{i}D_{j}\cu_{2}) \end{aligned} \right)
\end{aligned} \right. \end{aligned}}
\end{aligned}
\end{equation}
with as well for $t \in [0,1]$
$$P(t,x) = (x,t \cu_{1}(x)+ (1-t) \cu_{2}(x), t D\cu_{1}(x) + (1-t) D\cu_{2}(x)).$$
To see this more clearly, note that after using one-dimensional calculus, we further apply integration by parts to the term:
$$\begin{aligned}
\int \int_{0}^{1} & \frac{\partial A^{i}}{\partial z}(P(t,x)) \dt \cu D_{i}\zeta \dx \\
= & - \int D_{i} \left( \int_{0}^{1} \frac{\partial A^{i}}{\partial z}(P(t,x)) \dt \cu \right) \zeta \dx \\
= & - \int \int_{0}^{1} \frac{\partial A^{i}}{\partial z}(P(t,x)) \dt (D_{i} \cu) \zeta \dx \\
& - \int \int_{0}^{1} \frac{\partial D_{i}A^{i}}{\partial z}(P(t,x)) \dt \cu \zeta \dx \\
& - \int \int_{0}^{1} \left\{ \begin{aligned}
& \frac{\partial^{2} A^{i}}{\partial z^{2}}(P(t,x)) \\
& \times (tD_{i}\cu_{1}+(1-t)D_{i}\cu_{2})
\end{aligned} \right\} \dt \cu \zeta \dx \\
& - \int \int_{0}^{1} \sum_{j=1}^{n} \left\{ \begin{aligned}
& \frac{\partial^{2} A^{i}}{\partial p_{j} \partial z}(P(t,x)) \\
& \times (tD_{i}D_{j}\cu_{1}+(1-t)D_{i}D_{j}\cu_{2}) 
\end{aligned} \right\} \dt \cu \zeta \dx
\end{aligned}$$
using $A^{i} \in C^{2}(\overline{V} \times \bbR \times \bbR^{n})$ and $\cu_{1},\cu_{2} \in C^{2}(V)$ by Lemma \ref{c2}, which explains the definition of $c^{i},d;$ see Remark \ref{mainremark}(i).

\bigskip 

Moreover, note that for each $x \in V$ 
$$\cu(x)=\cu_{1}(x)-\cu_{2}(x) > \cu_{1}(0)-\cu_{2}(0)=0.$$
This implies that $\cu$ is a weak solution over $V$ of the equation
$$\sum_{i,j=1}^{n} D_{i}(a^{ij}D_{j}\cu) + \sum_{i=1}^{n-1} c^{i}D_{i}\cu + d_{-}\cu \leq 0$$
(see Definition \ref{weaksolution}) where $a^{ij},c^{i}$ for $i,j \in \{ 1,\ldots,n \}$ are as in \eqref{main3}, while 
\begin{equation} \label{main4}
d_{-}(x) = \max \{ 0,d(x) \} \text{ for } x \in V.
\end{equation}
As noted before, our aim is to apply Lemma \ref{hopf} to $\cu.$ However, we will not apply Lemma \ref{hopf} over the region $V,$ but instead over $W$ defined as follows.

\bigskip

Using $\| v \|_{C^{1,\alpha}(B^{n-1}_{1}(0))} \leq 1,$ define and compute
\begin{equation} \label{main5}
\begin{aligned}
w(y) = & 2v(y) + 3|y|^{1+\alpha} \text{ for } y \in B^{n-1}_{1}(0) \text{ where} \\
w \in & C^{1,\alpha}(B^{n-1}_{1}(0)), \\
w(0) = & 0 \text{ and } \D w(0)=0, \\
W = & \{ x \in B^{n-1}_{1}(0) \times (-3,3): x_{n} > w(\bop(x)) \}, \\
W \subseteq & V \cap \{ x \in B^{n-1}_{1}(0) \times (0,3): x_{n} > |\bop(x)|^{1+\alpha} \}, \\
\text{and } & \overline{B_{\frac{x_{n}}{4}}(x)} \subset V \text{ when } x \in W.
\end{aligned}
\end{equation}
Let us show the last claim. Fix $\hat{x} \in W,$ then the fifth item in \eqref{main5} implies $\hat{x}_{n} > 0.$ Moreover, for any $x \in \overline{B_{\frac{\hat{x}_{n}}{4}}(\hat{z})}$ we have by $\| v \|_{C^{1,\alpha}(B^{n-1}_{1}(0))} \leq 1$ and the definition of $w,W$ that
$$\begin{aligned}
x_{n} = & v(\bop(x)) + \hat{x}_{n} + x_{n} - \hat{x}_{n} + v(\bop(\hat{x})) - v(\bop(x)) - v(\bop(\hat{x})) \\
\geq & v(\bop(x)) + \hat{x}_{n} - |x_{n}-\hat{x}_{n}| - | \bop(\hat{x}) - \bop(x) | - v(\bop(\hat{x})) \\
\geq & v(\bop(x)) + \frac{\hat{x}_{n}}{2} - v(\bop(\hat{x})) \\
> & v(\bop(x)) + \frac{3}{2} |\bop(x)|^{1+\alpha} \geq v(\bop(x)).
\end{aligned}$$
Thus, $\overline{B_{\frac{\hat{x}_{n}}{4}}(\hat{z})} \subset V$ when $\hat{x} \in W.$

\bigskip 

We now check that $a^{ij},c^{i},d_{-}$ for $i,j \in \{ 1,\ldots,n \}$ as in \eqref{main3},\eqref{main4} satisfy the hypothesis of Lemma \ref{hopf} over $W,$ in reverse order:
\begin{itemize} 
 \item Using (iii) and $\cu_{1}(0)=\cu_{2}(0)=0$ we compute
$$\begin{aligned}
a^{ij}(0) = & \int_{0}^{1} \frac{\partial A^{i}}{\partial p_{j}}(0,0,tD\cu_{1}(0)+(1-t)D\cu_{2}(0)) \dt \\
= & \int_{0}^{1} \frac{\partial A^{j}}{\partial p_{i}}(0,0,tD\cu_{1}(0)+(1-t)D\cu_{2}(0)) \dt = a^{ji}(0)
\end{aligned}$$
for each $i,j \in \{ 1,\ldots,n \}.$
 \item $d_{-}(x) = \max \{ 0,d(x) \} \leq 0$ for each $x \in W.$
 \item By (ii) we have that $\{ a^{ij} \}_{i=1}^{n}$ are uniformly elliptic over $W$ with respect to $\lambda_{R} \in (0,\infty),$ where we set
 \begin{equation} \label{main6}
 R = \max \{ \| \cu_{1} \|_{C^{1,\alpha}(\overline{V})},\| \cu_{2} \|_{C^{1,\alpha}(\overline{V})} \}.
 \end{equation}
 \item By (i) and $\cu_{1},\cu_{2} \in C^{1,\alpha}(\overline{V})$ we immediately conclude
$$a^{ij} \in C^{0,\alpha}(\overline{W}) \text{ and } c^{i} \in L^{\infty}(W)$$
for each $i,j \in \{ 1,\ldots,n \}.$

\bigskip

We now show $d,$ and hence $d_{-},$ is in $L^{\frac{n}{1-\alpha}}(W).$ For this, since $0 \in \partial V$ we conclude by Lemma \ref{c2} that for each $\ell=1,2$
$$|D^{2}\cu(x)| \leq \frac{C_{\ref{c2}}}{\dist(x,\partial V)} \text{ for } x \in V \cap B_{1}(0)$$
where $C_{\ref{c2}}$ depends on
$$n, \ \alpha, \ R, \ \lambda_{R}, \ \{ \| A^{i} \|_{C^{2}(\overline{V} \times [-R,R] \times \overline{B_{R}(0)})} \}_{i=1}^{n}, \ \| B \|_{C^{1}(\overline{V} \times [-R,R] \times \overline{B_{R}(0)})}.$$
Now suppose $x \in W,$ then $\overline{B_{\frac{x_{n}}{4}}(x)} \subset V$ implies $\dist(x,\partial V) \geq \frac{x_{n}}{4}$ by \eqref{main5}. We thus conclude
\begin{equation} \label{main7}
|D^{2}\cu(x)| \leq \frac{4C_{\ref{c2}}}{x_{n}} \text{ for each } x \in W \cap B_{1}(0).
\end{equation}

\bigskip

We now consider each term in the definition of $d$ given in \eqref{main3}, which we bound independent of $t \in [0,1]$ over $W:$
\begin{itemize} 
 \item By (i) and \eqref{main6} we compute 
 $$\left| \frac{\partial B}{\partial z}(P(t,x)) \right| \leq \| B \|_{C^{1}(\overline{W} \times [-R,R] \times \overline{B_{R}(0)})}$$
 for $t \in [0,1]$ and $x \in W.$
 \item Similarly, we have for $t \in [0,1]$ and $x \in W$
 $$\left| \frac{\partial D_{i} A^{i}}{\partial z}(P(t,x)) \right| \leq \| A^{i} \|_{C^{2}(\overline{W} \times [-R,R] \times \overline{B_{R}(0)})}$$
and
$$\begin{aligned}
\Big| \frac{\partial^{2} A^{i}}{\partial z^{2}} (P(t,x)) \cdot & (t D_{i}\cu_{1}+(1-t)D_{i}\cu_{2}) \Big| \\
& \leq  \| A^{i} \|_{C^{2}(\overline{W} \times [-R,R] \times \overline{B_{R}(0)})} R
\end{aligned}$$
for each $i \in \{ 1,\ldots,n \}.$
 \item For $x \in W \cap B_{1}(0)$ we compute using \eqref{main7}, (iv) with \eqref{main6}, $\cu_{1}(0)=\cu_{2}(0)=0,$ and 
 $$W \subseteq \{ x \in B^{n-1}_{1}(0) \times (0,3): x_{n} > |\bop(x)|^{1+\alpha} \}$$
 by \eqref{main5} that when $\bop(x) \neq 0$
$$\begin{aligned}
\Big| \frac{\partial^{2}A^{i}}{\partial p_{j} \partial z}(P(t,x)) & \cdot (tD_{i}D_{j}\cu_{1}+(1-t)D_{i}D_{j}\cu_{2}) \Big| \\
& \leq \left| \frac{\partial^{2}A^{i}}{\partial p_{j} \partial z}(P(t,x)) \right| \frac{4C_{\ref{c2}}}{x_{n}} \\
& \leq \frac{4C_{\ref{c2}}C_{R}(|x|+|t\cu_{1}(x)+(1-t)\cu_{2}(x)|)}{x_{n}} \\
& \leq 4C_{\ref{c2}}C_{R} \left( \frac{|\bop(x)|}{x_{n}} + 1 + R \left( \frac{|\bop(x)|}{x_{n}} + 1 \right) \right) \\
& \leq 4C_{\ref{c2}}C_{R}(1+R) \left( \frac{|\bop(x)|}{|\bop(x)|^{1+\alpha}} + 1 \right) \\
& \leq \frac{8C_{\ref{c2}}C_{R}(1+R)}{|\bop(x)|^{\alpha}} 
\end{aligned}$$ 
for each $i,j \in \{ 1,\ldots,n \}.$ Moreover, note that we can compute using Tonelli's theorem
$$\int_{W \cap B_{1}(0)} \left( \frac{1}{|\bop(x)|^{\alpha}} \right)^{\frac{n}{1-\alpha}} \dx < \infty$$
since $\alpha \in (0,\frac{n-1}{2n-1}).$ 

\bigskip

Now suppose $x \in W \setminus B_{1}(0).$ In this case, either $x_{n} > \frac{1}{2}$ or $|\bop(x)| > \frac{1}{2},$ both of which imply by \eqref{main5} that $x_{n} > 2^{-1-\alpha}.$ We thus compute by \eqref{main7} and (iv) with \eqref{main6} that
$$\begin{aligned}
\Big| \frac{\partial^{2}A^{i}}{\partial p_{j} \partial z}(P(t,x)) & \cdot (tD_{i}D_{j}\cu_{1}+(1-t)D_{i}D_{j}\cu_{2}) \Big| \\
& \leq \| A^{i} \|_{C^{2}(\overline{V} \times [-R,R] \times \overline{B_{R}(0)})} \frac{4C_{\ref{c2}}}{x_{n}} \\
& \leq 2^{3+\alpha} \| A^{i} \|_{C^{2}(\overline{V} \times [-R,R] \times \overline{B_{R}(0)})} C_{\ref{c2}}
\end{aligned}$$
for each $i,j \in \{ 1,\ldots,n \}.$
\end{itemize}
We conclude $d,$ and hence $d_{-},$ is in $L^{\frac{n}{1-\alpha}}(W).$
\end{itemize}
Since $\cu(x) = \cu_{1}(x)-\cu_{2}(x)>\cu_{1}(0)-\cu_{2}(0) = 0$ for each $x \in W,$ we conclude by Lemma \ref{hopf} that $D_{n}\cu_{1}(0)<D_{n}\cu_{2}(0).$
\end{proof}

\begin{remark} \label{mainremark}
We remark on the proof and statement of Theorem \ref{main}.
\begin{itemize}
 \item[(i)] Observe that in the proof of Theorem \ref{main}, we could instead show $\cu = \cu_{1}-\cu_{2}$ solves an equation over $V$ of the form
$$\sum_{i,j=1}^{n} D_{i}(a^{ij} D_{j} \cu_{\ell} + b^{i} \cu_{\ell}) + \sum_{i=1}^{n} c^{i} D_{i} \cu_{\ell} + d \cu_{\ell} = 0,$$
where, as opposed to \eqref{main3}, the coefficients are only defined in terms of $\cu,D\cu$ and the first derivatives of $A^{1},\ldots,A^{n},B.$ The idea is to try to avoid setting hypothesis (iv).

\bigskip

However, applying the generalized Hopf boundary point lemma of \cite{R18b} in this case requires showing $\{ b^{i} \}_{i=1}^{n},d$ are weakly non-positive (see Definition 2.5 of \cite{R18b}). But in considering particular examples, this may be difficult (or impossible) to verify. Meanwhile, assumption (iv) of Theorem \ref{main} is far more accessible.
 \item[(ii)] Note that we need not specifically assume $\alpha \in (0,\frac{n-1}{2n-1}),$ since $\beta \in (0,\alpha)$ implies $C^{1,\alpha}(\Omega) \subset C^{1,\beta}(\Omega).$
 \item[(iii)] We need not assume $\| v \|_{C^{1,\alpha}(B^{n-1}_{1}(0))} \leq 1,$ by simply rescaling (as in the proofs of Lemmas \ref{c2},\ref{hopf}).
 \item[(iv)] What we really need in order to prove Theorem \ref{main} is
$$\Big| \frac{\partial^{2}A^{i}}{\partial p_{j} \partial z}(P(t,x)) \cdot (tD_{i}D_{j}\cu_{1}+(1-t)D_{i}D_{j}\cu_{2}) \Big| \leq \varphi(x)$$
 for each $t \in [0,1]$ and $x \in V,$ where $\varphi \in L^{q}(V)$ for some $q>n.$ Assumption (iv) merely guarantees this.
\end{itemize}
\end{remark}

\end{flushleft}
\end{document}